\newtheorem{theorem}{\bf Theorem}[section]
\newenvironment{proof}{\noindent{\em Proof:}}{\quad \hfill$\Box$\vspace{2ex}}
\newcommand{\bn}{{\bf n}}
\newcommand{\bx}{{\bf x}}
\newcommand{\LL}{L_\omega^2(\mathbb{R}^d)}
\newcommand{\argmin}{\operatornamewithlimits{argmin}}
\title{Sparse generalized Fourier series via collocation-based optimization \thanks{Approved for public release by USAF 88th 
ABW on 29 Sep 2014.  Case Number: 88ABW-2014-4587.  Presented at 2014 Applied Imagery and Pattern Recognition Workshop. }}
\author{\IEEEauthorblockN{Ashley Prater}\IEEEauthorblockA{Air Force Research Laboratory \\{\tt ashley.prater.3@us.af.mil}}}
\date{\today}
\begin{document}
\maketitle

\begin{abstract}
Generalized Fourier series with orthogonal polynomial bases have useful applications in several fields, including differential equations, pattern recognition, and image and signal processing.  However, computing the generalized Fourier series can be a challenging problem, even for relatively well behaved functions.  In this paper, a method for approximating a sparse collection of Fourier-like coefficients is presented that uses a collocation technique combined with an optimization problem inspired by recent results in compressed sensing research.  The discussion includes approximation error rates and numerical examples to illustrate the effectiveness of the method.  One example displays the accuracy of the generalized Fourier series approximation for several test functions, while the other is an application of the generalized Fourier series approximation to rotation-invariant pattern recognition in images.

\end{abstract}

\section{Introduction}

This paper discusses an efficient method to approximate the sparse generalized Fourier series of a given function in terms of orthogonal polynomials in several dimensions.  That is, given a function $f:\mathbb{R}^d\to\mathbb{R}$ satisfying certain properties, we seek to compute the Fourier-like coefficients $\{\hat{c}_\bn:\bn\in W\}$ such that 
\begin{equation}\label{eq:sparse generalized FH series}
	f \approx \sum_{\bn\in W} \hat{c}_\bn \pi_\bn,
\end{equation}
where $\{\pi_\bn\}$ is a collection of orthogonal polynomials and $W$ is is a sparse collection of multi-indices.

To compute the sparse collection of Fourier-like coefficients, we propose using a collocation method in an optimization scheme motivated by recent results in the field of compressed sensing.  This method seeks a sparse collection of coefficients such that the finite sum appearing in~\eqref{eq:sparse generalized FH series} nearly interpolates $f$ at carefully chosen nodes.  

The notion of sparsity has been increasingly emphasized in recent literature.  As the amount of useful data collected continues to increase, the extraction of meaningful data is often hindered by the so-called `curse of dimensionality'.  Too alleviate its effects, methods exploiting any natural sparse or hierarchical structures in data have been developed.  Research in sparse grids, particularly when coupled with a hierarchical scheme using Smolyak's algorithm, laid the foundation for accurate computation of high dimensional problems while significantly reducing the computational complexity from $\mathcal{O}(N^d)$ to $\mathcal{O}(N \log^{d-1}N)$, where $d$ is the underlying dimensionality of the problem and $N$ is the number of grid points in each coordinate direction~\cite{bungartz, zenger}.

For linear regression problems, the importance of sparsity has emerged in recent advances in compressed sensing.  Given a sensing matrix $X\in\mathbb{R}^{m\times p}$ with $m\ll p$ and a collection of under sampled linear measurements $y = Xc$, using compressed sensing techniques one may approximate the sparse vector $c$ via the optimization problem
\begin{equation*}
	\tilde{c} = \argmin \|c\|_1 \text{ satisfying } Xc = y.
\end{equation*}
Under certain sparsity and coherence conditions, the recovery will be exact~\cite{baraniuk, candestao, candestao2, donohohuo}.  

On the other hand, sparse generalized Fourier series approximations with orthogonal polynomial bases typically use spectral or pseudospectral approximations on sparse grids~\cite{constantine2012, jiangxu2010, prater, shenreview, shen2010}.  These methods determine the sparse collection of multi-indices a priori, which may lead to unintentional deletion of significant terms in the approximation.  To compute the coefficients in the aforementioned works, detailed hierarchical quadrature methods have been developed to perform the numerical integration required for the desired accuracy while keeping the computational complexity low.  Although effective, these methods can be difficult to implement.

In comparison, the method proposed in this paper has a straightforward implementation.  In addition, a major strength of the proposed method is that the sparse index set does not need to be selected in advance.  Instead, the optimization scheme will select the best sparse selection of multi-indices to approximate the function.

The following notation will be used throughout this work.  Vectors will be denoted by boldface letters.  Let $\mathbb{R}$ denote the real numbers, $\mathbb{N}$ the natural numbers and $\mathbb{N}_0 = \mathbb{N}\cup \{0\}$.  Let $\mathbb{R}^d$ denote the usual $d$-dimensional Euclidean space with obvious analogues $\mathbb{N}^d$ and $\mathbb{N}_0^d$.  Let $n_j$ denote the $j^\text{th}$ entry of the vector $\bn$.  Let $C(\Omega)$ equal the space of all continuous real-valued functions defined on a domain $\Omega$.   The space of $\omega$-weighted square-Lebesgue integrable functions over $\mathbb{R}^d$ is denoted by $\LL$ with the associated weighted inner product $\langle \cdot, \cdot \rangle_\omega$ defined by
$\langle f, g \rangle_{\omega} = \int_{\mathbb{R}^d} f g \; \mathrm{d}\omega$ for all $f,g\in L_\omega^2(\mathbb{R}^d)$.  The $\omega$-weighted inner product naturally induces the weighted norm $\|\cdot\|_\omega$.

The rest of this paper is organized as follows.  In Section~2, the proposed method for approximating the sparse generalized Fourier series of a function is presented, along with an analysis of the approximation error.  The advantages of the proposed method are stated and compared to the difficulties exhibited by existing methods.  Section~3 includes numerical experiments in which the proposed method is implemented.  The first set of experiments approximates the sparse generalized Fourier series of several bivariate test functions.  The second set of experiments applies the sparse approximated Fourier-like coefficients to compute the rotation-invariant Gaussian-Hermite moments, and uses these moments to perform classification of unknown rotated images.  The paper concludes with a summary and directions of possible future work.
%
%
%
%

\section{Sparse Generalized Fourier Series with Orthogonal Polynomial Bases}
The proposed method approximates a given function by a sparse generalized Fourier series with an orthogonal polynomial basis.  After selecting a set of multi-indices and evaluating the orthogonal polynomials at pre-determined nodes, the sparse collection of coefficients is computed using a collocation method with a convex optimization problem.  This section explores the multi-index and node selection, thoroughly explains the collocation model used to approximate the Fourier-like coefficients and discusses the approximation errors.  We begin with preliminary facts about orthogonal polynomials and generalized Fourier series.

\subsection{Full and sparse grid orthogonal polynomial representations}
Let $\Omega\subseteq\mathbb{R}^d$.  Let $\omega:\Omega\to\mathbb{R}$ be a continuous weight function.  That is, suppose $\omega\geq 0$ on $\Omega$ and $\omega$ satisfies 
\[ \int_\Omega \mathrm{d}\omega = 1. \]
Associated with each weight function is a collection of orthogonal polynomials.  A set $\{\pi_\bn : \bn \in \mathbb{N}_0^d\}$ satisfying both of the following conditions is the collection of $\omega$-orthogonal polynomials:
\begin{enumerate}
	\item Each $\pi_\bn$ is a multivariate polynomial.
	\item $\displaystyle \left\langle \pi_\bn, \pi_{\bf m}\right\rangle_\omega := \int_\Omega \pi_\bn \pi_{\bf m} \mathrm{d}\omega = 0$ if $\bn \neq {\bf m}$.
\end{enumerate}
Some common collections of univariate orthogonal polynomials and their associated weight functions are given in Table~\ref{tab:orthogonal polys}~\cite{abramowitz}. 

\begin{table}[bhtp]
\begin{tabular}{l|c|c}
Name & $\omega$	&$\Omega$\\
\hline
Chebyshev (First Kind)		&$1/\sqrt{1-x^2}$	&$[-1,1]$	\\
Chebyshev (Second Kind)		&$\sqrt{1-x^2}$	&$[-1,1]$	\\
Legendre					&$1$				&$[-1,1]$	\\
Laguerre					&$e^{-x}$			&$[0,\infty)$	 \\
Hermite					&$e^{-x^2}$		&$\mathbb{R}$	\\
\end{tabular}
\caption{Some common univariate orthogonal polynomials, their associated weight functions $\omega$, and their domains $\Omega$.}
\label{tab:orthogonal polys}
\end{table}

Let $\{\pi_\bn: \bn\in\mathbb{N}_0^d\}$ be a collection of multivariate orthogonal polynomials relative to the weight $\omega$.  Suppose the orthogonal polynomials have been normalized so that $\|\pi_\bn\|=1$ for each $\bn$.  Since the orthogonal polynomials relative to the weight $\omega$ form a complete orthogonal basis of $L_\omega^2(\mathbb{R}^d)$, any function $f\in L_\omega^2(\mathbb{R}^d)$ can be expressed as 
\begin{equation}\label{eq:FH}
	f = \sum_{\bn \in \mathbb{N}_0^d} c_\bn \pi_\bn,
\end{equation}
where the equality is understood to mean convergence in norm and the Fourier-like coefficients are given exactly by
\begin{equation}\label{eq:fourier coefficients}
	c_\bn = \langle f, \pi_\bn \rangle_\omega.
\end{equation}
The expression in Equation~\eqref{eq:FH} is called the exact generalized Fourier series of $f$.  

The computation of Equation~\eqref{eq:FH} presents several difficulties.  Not only does the full-grid generalized Fourier series involve infinitely many terms, but the Fourier-like coefficients can be difficult or impossible to compute exactly even for seemingly innocuous functions $f$.  Moreover, it is challenging to implement numerical integration techniques achieving the desired accuracy due to the highly oscillatory behavior of the integrands. Therefore methods to estimate the generalized Fourier series~\eqref{eq:FH} must tackle both the series truncation and coefficient computation issues while ensuring that the approximation error for one does not overwhelm the other. 

To tackle the series truncation difficulty, we explore three sets of multi-indices.  Fix a positive integer $N$ and consider the sets of candidate full-grid multi-indices
\[ Y_N^d = \left\{ \bn \in \mathbb{N}_0^d : \bn \leq N \right\}, \]
\[ T_N^d = \left\{ \bn \in \mathbb{N}_0^d : \|\bn\|_1 = N \right\},\]
and the sparse-grid hyperbolic cross-shaped multi-indices
\[ S_N^d = \left\{ \bn \in \mathbb{N}_0^d : \prod_{j=1}^d (n_j + 1)\leq N+1 \right\}. \]
Clearly $|Y_N^d| = \mathcal{O}(N^d)$ and $|S_N^d| = \mathcal{O}(N\log^{d-1}N)$.  It can be shown that $S_N^d \subset Y_N^d$~\cite{bungartz}.  These pre-determined multi-indices are commonly used in spectral and pseudospectral methods~\cite{jiangxu2010, prater, shenreview, shen2010}.

Define the full and sparse-grid truncated generalized Fourier series as
\begin{equation}\label{eq:truncation}
	f_W := \sum_{\bn \in W} c_\bn \pi_\bn
\end{equation}
for $W\in\{Y_N^d, T_N^d, S_N^d \}$.

The error rates for the truncated orthogonal polynomial representations of a function~\eqref{eq:truncation} depend on the order of the approximating polynomials as well as the behavior of the function itself.  
\begin{theorem} \label{th:truncation}
For any $s>0$ and $f\in L_\omega^2(\mathbb{R}^d)$,
\[ \left\| f - f_W\right\|_\omega \leq N^{-s} \|f\|_{\kappa^s}, \]
where $W \in \left\{ Y_N^d, T_N^d, S_N^d\right\}$ and $\|\cdot\|_{\kappa^s}$ is the Korobov norm of order $s$ induced by the inner product
\[	\left\langle f, g \right\rangle_{\kappa^s} = \sum_{\bn\in \mathbb{N}_0^d} \langle f, \pi_\bn \rangle_\omega \langle g,\pi_\bn\rangle_\omega \prod_{j=1}^d (1+n_j)^{2s}. \]
\end{theorem}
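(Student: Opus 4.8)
The plan is to reduce the claim to a pointwise lower bound on the Korobov weight $\prod_{j=1}^d(1+n_j)^{2s}$ over the excluded multi-indices $\bn\notin W$, and then read the whole inequality off Parseval's identity. Since $\{\pi_\bn\}$ is orthonormal in $\LL$ and the series~\eqref{eq:FH} converges in norm, the truncation error is exactly the tail of the coefficient sequence, so the first step would be to record
\[ \|f - f_W\|_\omega^2 = \Big\|\sum_{\bn\notin W} c_\bn\pi_\bn\Big\|_\omega^2 = \sum_{\bn\notin W} c_\bn^2, \]
where $c_\bn = \langle f,\pi_\bn\rangle_\omega$. This isolates the quantity I must control and writes it purely in terms of coefficients.

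The central observation is that every multi-index absent from $W$ carries a large Korobov weight: I would show $\bn\notin W$ forces $\prod_{j=1}^d(1+n_j) > N$ for each of the three candidate sets. For $Y_N^d$ this is immediate, since some coordinate satisfies $n_j > N$ and that single factor already exceeds $N$. For $S_N^d$ it is the defining condition: $\bn\notin S_N^d$ means exactly $\prod_j(n_j+1) > N+1$. For the total-degree set the one nontrivial estimate is the elementary inequality $\prod_{j=1}^d(1+n_j)\geq 1+\sum_{j=1}^d n_j$, obtained by expanding the product and discarding the nonnegative mixed terms; combined with $\|\bn\|_1 > N$ this again yields $\prod_j(1+n_j) > N$. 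In every case, raising to the power $2s>0$ gives $\prod_j(1+n_j)^{2s} > N^{2s}$, equivalently $1\leq N^{-2s}\prod_j(1+n_j)^{2s}$ on the excluded indices.

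Substituting this weight bound termwise into the tail sum, and then extending the summation back to all of $\mathbb{N}_0^d$ (each reinstated term being nonnegative), would complete the estimate:
\[ \sum_{\bn\notin W} c_\bn^2 \;\leq\; N^{-2s}\sum_{\bn\notin W} c_\bn^2\prod_{j=1}^d(1+n_j)^{2s} \;\leq\; N^{-2s}\sum_{\bn\in\mathbb{N}_0^d} c_\bn^2\prod_{j=1}^d(1+n_j)^{2s} \;=\; N^{-2s}\|f\|_{\kappa^s}^2, \]
where the final equality is just $\langle f,f\rangle_{\kappa^s}$. Taking square roots then gives the stated bound uniformly in the three choices of $W$.

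I expect no serious obstacle: the argument is essentially Parseval plus a weight comparison, and the bound is in fact slightly loose (one obtains $\prod_j(1+n_j)\geq N+2$ throughout). The only point demanding real care is the total-degree case, where the product weight must be controlled by the $\ell^1$ degree; the product-versus-sum inequality dispatches this cleanly. Before writing the case analysis I would also confirm the intended reading of the total-degree set as $\|\bn\|_1\leq N$ rather than $\|\bn\|_1 = N$, since a single-layer index set would leave low-order terms such as the constant in the tail and render the estimate false.
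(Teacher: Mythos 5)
Your proof is correct and is essentially the standard argument that the paper itself only cites (to \cite{jiangxu2010, prater}) rather than reproducing: Parseval's identity turns the truncation error into the coefficient tail $\sum_{\bn\notin W}c_\bn^2$, and the pointwise weight bound $\prod_{j=1}^d(1+n_j)>N$ on the excluded indices --- using $\prod_{j=1}^d(1+n_j)\geq 1+\|\bn\|_1$ for the total-degree case --- supplies the factor $N^{-2s}$ after extending the sum to all of $\mathbb{N}_0^d$. You are also right to flag the typo in the statement: $T_N^d=\left\{\bn:\|\bn\|_1=N\right\}$ must be read as $\|\bn\|_1\leq N$, since with equality $f_{T_N^d}$ omits every low-order term and the bound fails already for $f=\pi_{\bf 0}$.
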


The proof of Theorem~\ref{th:truncation} regarding the accuracy of the full and sparse-grid truncated generalized Fourier series appears in~\cite{jiangxu2010, prater} for Hermite polynomial bases.  The proof can easily be generalized to bases of other orthogonal polynomial classes.

\subsection{Fourier-like coefficients}
To tackle the coefficient computation difficulty, an optimization problem with a collocation method will be employed.  To this end, Let $\Lambda \subset \mathbb{R}^d$ be a finite collection of nodes with $|\Lambda|=m$, enumerated so that $\Lambda = \left\{ \bx_1, \bx_2,\ldots,\bx_m\right\}$.  Choose $N$, let $W\in\{Y_N^d, T_N^d, S_N^d\}$ and say $|W| = p$.  Enumerate $W$ so that $W = \left\{\bn_1, \bn_2,\ldots,\bn_p\right\}$.  Define the $j^\text{th}$-row $k^\text{th}$-column entry of the collocation matrix $X \in \mathbb{R}^{m\times p}$ by $X_{j,k} = \pi_k(\bx_j)$, and let the $j^\text{th}$ entry of the vector ${\bf f}\in\mathbb{R}^m$ be  given by ${\bf f}_j = f(\bx_j)$.  

The collocation method seeks a sparse collection of coefficients $\{\tilde{c}_\bn: \bn \in W\}$ that satisfies
\begin{equation}\label{eq:collocation}
	f(\bx_j) = \sum_{\bn \in W} \tilde{c}_\bn \pi_\bn (\bx_j) \quad \forall \bx_j \in \Lambda.
\end{equation}
More succinctly, Equation~\eqref{eq:collocation} can be expressed as ${\bf f}~=~X\tilde{\bf c}$.

Although the Jacobi-like collocation matrix $X$ is full rank, it is not necessarily square.  Indeed, we are interested in the case $m\ll p$. One could relax the equality in~\eqref{eq:collocation} and seek $\tilde{\bf c}$ that minimizes $\|{\bf f} - X \tilde{\bf c}\|_2$.  This is exactly a least squares optimization problem with solution $\tilde{\bf c} = X^\dagger {\bf f},$ where $X^\dagger$ is the pseudoinverse of $X$. However, the least squares solution does not promote sparsity.  Unless one knew a priori the exact sparse set $W$ to use, it is unlikely using least squares would result in a sparse representation of $f$.  

Instead, we propose approximating the collocation coefficients using the Dantzig selector, which is a solution to the optimization problem
\begin{equation}\label{eq:dantzig}
	\begin{cases} \mathrm{minimize} \|{\bf c}\|_1 \\
	\text{subject to} \left\| D^{-1} X^\top (X{\bf c} - {\bf f}) \right\|_{\infty} \leq \delta
	\end{cases},
\end{equation}
where $D$ is the $m\times m$ diagonal matrix normalizing the columns of $X$ and the scalar $\delta>0$ is small.  Let $\hat{\bf c}$ be a solution of the optimization problem~\eqref{eq:dantzig}.  Then a sparse generalized Fourier-series approximation to the function $f$ is given by
\begin{equation}\label{eq:approx final}
	\hat{f} = \sum_{\bn \in W} \hat{\bf c}_\bn \pi_\bn.
\end{equation}

Several methods exist to quickly and accurately solve the optimization problem~\eqref{eq:dantzig}, including an alternating direction method~\cite{lupong} and an iterative method based upon proximity operators~\cite{pratershenDantzig}.

Before performing the error analysis of the approximation~\eqref{eq:approx final}, some notation must first be presented. For a domain $\Omega\subseteq\mathbb{R}^d$, let $C(\Omega)$ denote the class of all real-valued continuous functions on $\Omega$, and let 
\[C^m(\Omega) = \{ f : f^{(\alpha)} \in C,\; \forall |\alpha|\leq m \}.\]  
For any positive integer $m$, define a seminorm\\ $|\cdot|_{X_m(\Omega)}$ on $C^m(\Omega)$ as 
\[|f|_{X_m(\Omega)} = \max \left\{ |f^{(\alpha)}({\bf x})| : {\bf x}\in\Omega, |\alpha|\leq m \right\}.\]
For any function $f:\mathbb{R}^d\to\mathbb{R}$, let the supnorm of $f$ restricted to the domain $\Omega$ be given by $\| f \|_{\infty,\Omega}$.

The error in the sparse orthogonal polynomial interpolation of a function $f$ greatly depends on the highest order of the polynomials used in the summation as well as the smoothness and behaviour of $f$ itself.
\begin{theorem}\label{th:interpolation error}
	If the nodes $\Lambda = \left\{ {\bf x}  : \pi_{M+1}(x_j) = 0 \right\} \subseteq \mathbb{R}^d$ are a rectangular grid of the zeros of the $M+1^\text{th}$ orthgonal univariate polynomial and if $f\in L_\omega^2(\mathbb{R}^d)\cap C^m(\mathbb{R}^d)$, then for any simply connected domain $\Omega\subseteq \mathbb{R}^d$ there exists a constant $c>0$ such that 
\[	\left\| f - \sum_{\bn \in Y_N^d} \tilde{\bf c}_\bn \pi_\bn \right\|_{\infty, \Omega} \leq c 2^{-mM}\left| f \right|_{X^m_M(\Omega)}.\]
\end{theorem}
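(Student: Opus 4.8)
The plan is to reduce the $d$-dimensional interpolation error to a combination of one-dimensional errors, exploiting that both the index set $Y_N^d$ and the node set $\Lambda$ carry a full tensor-product structure. Taking $N=M$, so that the number $(M+1)^d$ of rectangular-grid nodes matches $|Y_M^d|$, the collocation matrix $X$ becomes a Kronecker product of univariate Vandermonde matrices built from the distinct zeros of $\pi_{M+1}$; each factor is nonsingular, so the system ${\bf f}=X\tilde{\bf c}$ is square and unisolvent. Consequently $\sum_{\bn\in Y_M^d}\tilde{\bf c}_\bn\pi_\bn$ is exactly the tensor-product Lagrange interpolant $\mathcal{I}_M f=\big(\mathcal{I}_M^{(1)}\otimes\cdots\otimes\mathcal{I}_M^{(d)}\big)f$, where $\mathcal{I}_M^{(j)}$ interpolates in the $j$th coordinate at the zeros of $\pi_{M+1}$ by a polynomial of degree at most $M$.

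The core of the argument is a univariate estimate: for $g\in C^m$ on the relevant interval, the degree-$M$ interpolant at the zeros of $\pi_{M+1}$ should obey
\[
\left\| g - \mathcal{I}_M^{(j)} g \right\|_{\infty} \;\leq\; c\,2^{-mM}\,\max_{0\leq k\leq m}\left\| g^{(k)} \right\|_{\infty}.
\]
I would obtain this from the classical interpolation remainder, in which the nodal polynomial $\prod_{k=0}^{M}(x-x_k)$ is a scalar multiple of $\pi_{M+1}$, together with quantitative bounds on the supremum of $\pi_{M+1}$ and on the spacing of its zeros on $\Omega$. The simple connectedness of $\Omega$ guarantees that the Rolle/mean-value step linking the remainder to a derivative of $f$ of order at most $m$ remains valid across the nodes, and it is here that the specific orthogonal family and its weight enter to produce the geometric factor $2^{-mM}$.

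With the univariate building block in hand, I would expand the multivariate error by the standard telescoping identity
\[
f-\mathcal{I}_M f \;=\; \sum_{j=1}^{d}\Big(\mathcal{I}_M^{(1)}\otimes\cdots\otimes\mathcal{I}_M^{(j-1)}\otimes(I-\mathcal{I}_M^{(j)})\otimes I\otimes\cdots\otimes I\Big)f,
\]
apply the univariate estimate to the factor $(I-\mathcal{I}_M^{(j)})$ in each summand, and control the remaining interpolation operators through their Lebesgue constants, whose growth (logarithmic for Gauss-type nodes) is dominated by the geometric factor $2^{-mM}$. Passing from partial derivatives in a single coordinate to the full seminorm $|f|_{X_M^m(\Omega)}$ and summing the $d$ terms absorbs the dimension into the constant, yielding the claimed bound $c\,2^{-mM}\,|f|_{X_M^m(\Omega)}$.

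The main obstacle is the univariate estimate itself, and in particular securing the exponential factor $2^{-mM}$ from only $C^m$ regularity. This forces a careful analysis of the nodal polynomial $\pi_{M+1}$ and its derivatives in the supremum norm on $\Omega$, using the explicit asymptotics of the zeros, values, and weight of the chosen orthogonal polynomials, rather than the generic factor $(M+1)!^{-1}$ that would instead require $f\in C^{M+1}$.
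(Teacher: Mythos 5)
The paper never actually proves this theorem: it states it and defers to the Hermite-case argument in the author's thesis~\cite{prater}, so there is no in-paper proof to match line by line. Your scaffolding is nonetheless the natural (and almost certainly the intended) one: take $N=M$ so the tensor-product system is square and unisolvent, identify $\tilde{\bf c}$ with the coefficients of the tensor-product Lagrange interpolant at the Gauss-type grid, prove a univariate error bound, and telescope across coordinates.

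The genuine gap is your univariate lemma, and it is not merely unproven --- it is false as you state it. If $\| g - \mathcal{I}_M g\|_{\infty} \leq c\, 2^{-mM} \max_{0\leq k \leq m} \|g^{(k)}\|_\infty$ held for every $g \in C^m$ with $c$ independent of $M$ and $m\geq 1$ fixed, then the best sup-norm polynomial approximation error of every $C^m$ function on a compact interval would decay geometrically in the degree; by Bernstein's inverse theorem, geometric decay forces analyticity of $g$ in a Bernstein ellipse, contradicting the existence of non-analytic $C^m$ functions. Under fixed finite smoothness, Jackson-type theorems give only the algebraic rate $\mathcal{O}(M^{-m})$, and that is sharp. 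The exponential factor $2^{-mM}$ therefore cannot be extracted from $C^m$ regularity plus asymptotics of $\pi_{M+1}$, no matter how carefully the nodal polynomial is analyzed: the classical remainder you invoke consumes a derivative of order $M+1$, i.e.\ smoothness growing with $M$, and in the cited thesis this $M$-dependence is carried by the seminorm $|f|_{X_M^m(\Omega)}$ (note the subscript $M$, which is not the fixed-order seminorm $\max_{|\alpha|\leq m}$ you substitute for it). Your closing paragraph flags exactly this as ``the main obstacle'' but offers no mechanism to overcome it, and none exists in the form you propose. Two secondary problems in the same step: the Rolle/mean-value point $\xi$ in the remainder lies in the convex hull of the evaluation point and all $M+1$ nodes, and the Hermite zeros spread like $\sqrt{2M}$, so for a fixed small $\Omega$ the derivative is evaluated outside $\Omega$ and cannot be controlled by a seminorm over $\Omega$ alone; and the Lebesgue constants for Hermite nodes in the unweighted sup norm are not logarithmic (that is a Chebyshev phenomenon), though with a genuinely geometric univariate bound in hand that inaccuracy would be absorbable.
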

The proof of Theorem~\ref{th:interpolation error} appears in~\cite{prater} for multivariate Hermite orthogonal polynomials.  The generalization to other classes of orthogonal polynomials is straightforward.

Finally, since the polynomials $\{\pi_\bn\}$ are orthonormal relative to $\omega$, the error in approximating $\hat{f}$ by the interpolated polynomial $\tilde{f} = \sum_{{\bf n}\in W} \tilde{c}_{\bn} \pi_\bn$ is characterized by the difference in their coefficients:
\[	\left\| \hat{f} - \tilde{f} \right\|^2_\omega	= \left\| \sum_{\bn \in W} (\hat{c}_\bn - \tilde{c}_\bn)\pi_\bn\right\|_\omega^2 = \left\| \hat{c}_\bn -\tilde{c}_\bn\right\|_\omega^2,\]
which is guaranteed to be small provided $\tilde{c}$ has a small $\ell_1$ norm.  Indeed, the constrant in~\eqref{eq:dantzig} forces the coefficient error to be small between the recovered polynomial and the interpolated polynomial.
%
%
%
%
%
%
%
%

\section{Numerical Examples}
In the following experiments, the sparse generalized Fourier series approximation is computed for various test functions using the collocation approach presented in Section~2.  In Experiment 1, various node and multi-index selection methods are used for several test functions.  The accuracy of the approximation for each test function is presented.  To illustrate the utility of the proposed method, the collocation method presented above is used to quickly generate the Gausian-Hermite moments of several images in a training set in Experiment 2, which are then used for classification of rotated images in a testing set.  

The Hermite polynomials are used as the orthogonal basis in the sparse representations.  Consider the collection of orthogonal univariate Hermite polynomials with associated weight function $\omega(x,y) = e^{-(x^2+y^2)}$.   The first few univariate Hermite polynomials prior to normalization are $\pi_0(x) = 1, \; \pi_1(x) = 2x, \; \pi_2(x) = 4x^2-2$, and satisfy the three term recurrence
\[	\pi_{n+1}(x) = 2x\pi_n(x) - 2n\pi_{n-1}(x) \]
for $n\in\mathbb{N}$.  Since $\omega$ is separable, the multivariate Hermite polynomials are products of the univariate ones:
\[ \pi_\bn({\bf x}) = \pi_{n_1}(x_1)\pi_{n_2}(x_2)\cdots\pi_{n_d}(x_d).\]
The polynomials are then normalized as 
\[\pi_n \leftarrow \pi_n/\sqrt{n!2^n\sqrt{\pi}}.\]

To find the solution to the optimization problem~\eqref{eq:dantzig}, we use the proximity operator based iterative method recently proposed in~\cite{pratershenDantzig}.  The proximity operator based method is straightforward and uses only two soft thresholding operations at each iteration, requiring $\mathcal{O}(4mp)$ multiplications in each iteration for a $m\times p$ collocation matrix $X$.

All experiments are performed in MATLAB 2014a on a PC with an Intel Core i7-3630QM 2.40 GHz processor and 16GB RAM running Windows 7 Enterprise with machine precision $2.2204e-16$.

{\bf Experiment 1.} 
\begin{figure*}
\centering
	\includegraphics[width=0.3\textwidth]{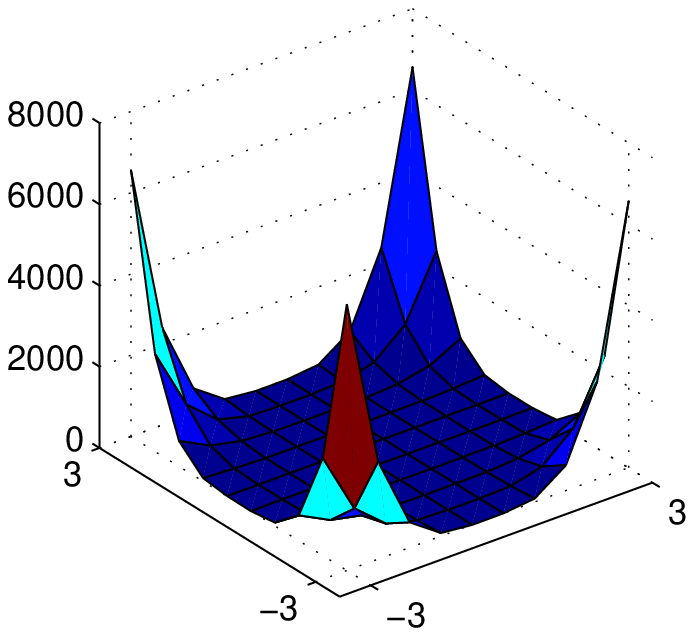}
	\includegraphics[width=0.30\textwidth]{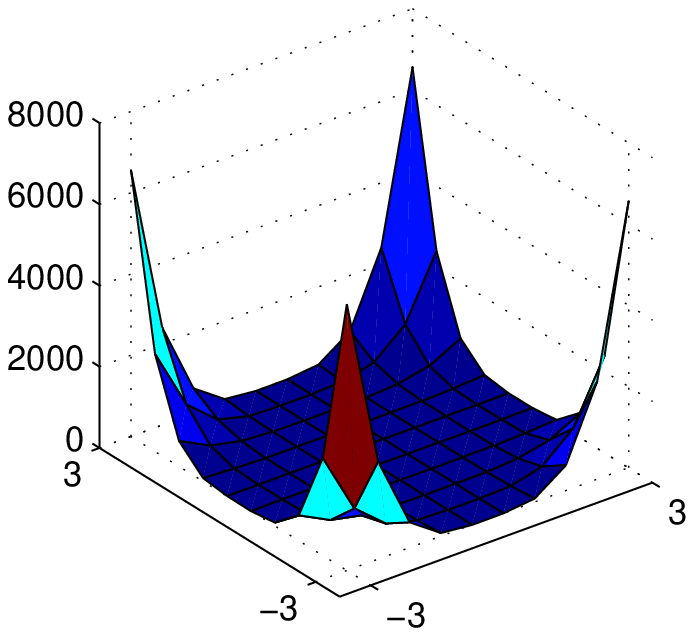}
	\includegraphics[width=.3\textwidth]{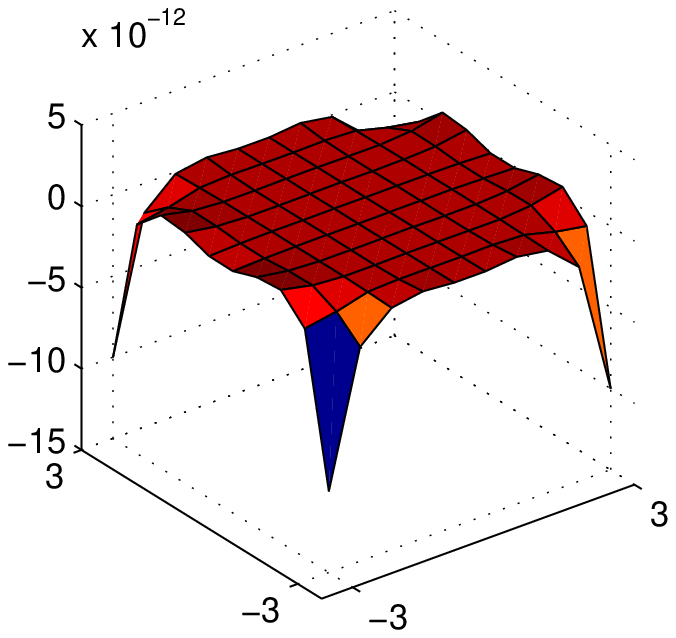}

	\includegraphics[width=0.3\textwidth]{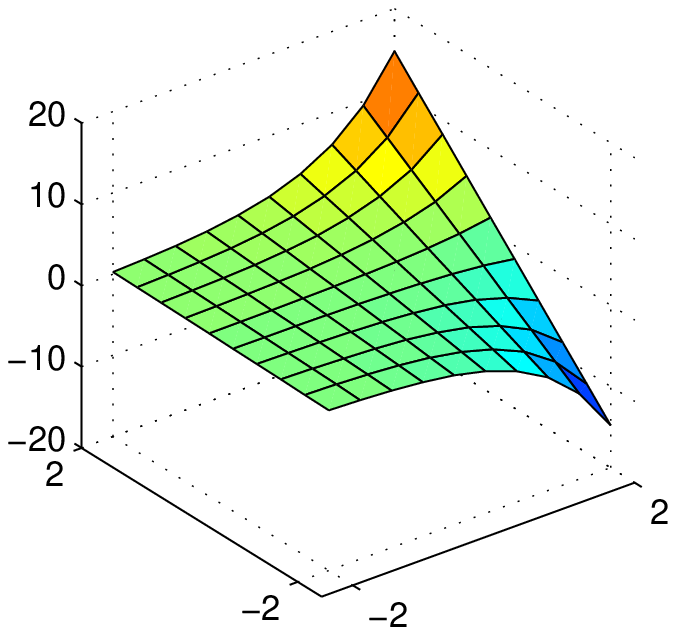}
	\includegraphics[width=0.3\textwidth]{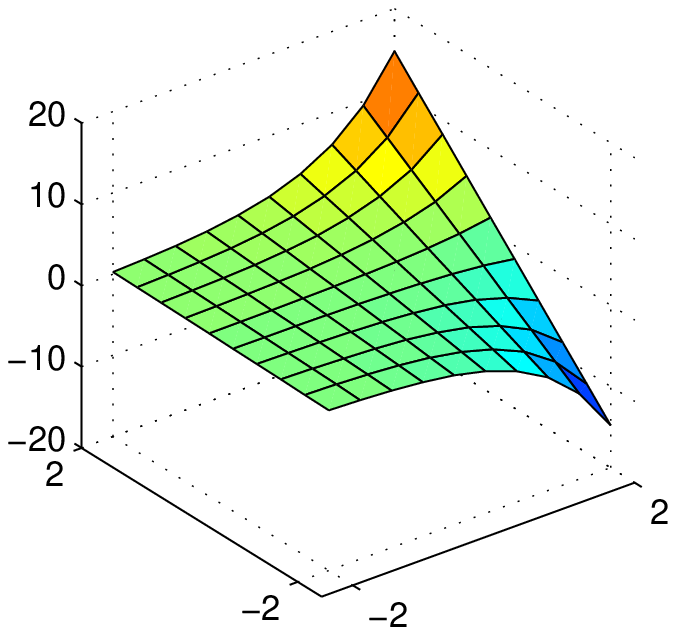}
	\includegraphics[width=0.3\textwidth]{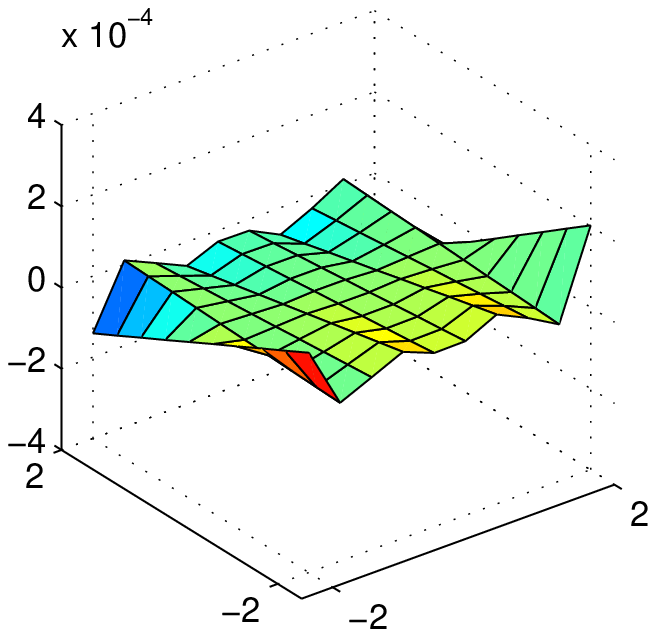}
%
%
\caption{ The exact function (left), the approximated generalized Fourier series (center) and the pointwise approximation error (right) for the functions $f_2$ (top row) and $f_3$ (bottom row) in Example 1.  The approximations for $f_2$ use $Y_5^2$ as the collection of multi-indices and $M=5$.  The approximations for $f_3$ use $Y_{10}^2$ and $M=10$. }
\label{fig:example 1 f2}
\end{figure*}

In this experiment, the sparse generalized Fourier series approximation several given functions are computed.  Suppose $d=2$ and consider the functions
\begin{align*}
	f_1(x,y) &= x^2y^2 \\
	f_2(x,y) &= x^4y^4\\
	f_3(x,y) &= xe^{y/2}.
\end{align*}
It is easy to check that $f_j \in L^2_\omega(\mathbb{R}^2)$ for each $j$.

To create the collocation matrix $X$ for use in the scheme~\eqref{eq:dantzig},  we use the full-grid rectangular, triangular and a sparse-grid hyperbolic cross shaped set of candidate multi-indices, $Y_N^d, T_N^d$ and $S_N^d$, along with nodes formed by a rectangular collection the zeros of the $M$ degree univariate Hermite polynomials. The parameter $N$ determines the highest degree hermite polynomial used in the approximation, and the parameter $M$ determines the number of nodes used as collocation points.

\begin{table*}
	\centering
	\begin{tabular}{c|c|c|cccccccc}
	\multicolumn{1}{c}{}&\multicolumn{1}{c}{}&&\multicolumn{8}{c}{$N$} \\
	\multicolumn{1}{c}{}&\multicolumn{1}{c}{$W$}&$M$&2&3&4&5&6&7&8&9\\
	\hline
	\multirow{9}{*}{\begin{sideways} $\hat{f}_1$\end{sideways}}
	&\multirow{3}{*}{\begin{sideways}$Y_N^2$\end{sideways}} 
&N-1	        &3.1250e-01   &1.8750e-01   &1.5830e+00   &6.9389e-17   &1.3597e-16   &2.3967e-16   &8.7694e-16   &2.0015e-16   \\
&&N  	 	&1.8750e-01   &2.3097e-16   &6.9389e-17   &1.6303e-16   &2.3967e-16   &1.2178e-16   &2.0015e-16   &1.2478e-15   \\
&&N+1   	&2.3097e-16   &6.9389e-17   &1.6303e-16   &2.3967e-16   &1.2178e-16   &2.0015e-16   &1.2478e-15   &5.4918e-16    \\
\cline{2-11}
&\multirow{3}{*}{\begin{sideways}$T_N^2$\end{sideways}}
&N-1		&3.0619e-01   &1.7678e-01   &1.7922e-01   &6.9389e-17   &7.9722e-17   &6.2450e-17   &3.0938e-16   &6.7987e-17\\
&&N 		&1.7678e-01   &3.0619e-01   &6.9389e-17   &1.2432e-16   &6.2450e-17   &9.3095e-17   &6.7987e-17   &7.8505e-17  \\
&&N-1 	&3.0619e-01   &1.0607e+00   &1.2432e-16   &1.6997e-16  & 9.3095e-17   &4.6548e-16   &7.8505e-17   &5.1962e-15 \\
\cline{2-11}
&\multirow{3}{*}{\begin{sideways}$S_N^2$\end{sideways}}
&N-1 	&3.0619e-01   &1.7678e-01   &3.0619e-01   &1.0607e+00   &2.3117e+00   &4.0620e+00   &1.1938e-16   &1.6464e-15 \\
&&N 		&1.7678e-01   &3.0619e-01   &1.0607e+00   &2.3117e+00   &4.0620e+00   &6.3122e+00   &1.6464e-15   &2.5542e-15 \\
&&N+1 	&3.0619e-01   &1.0607e+00   &2.3117e+00   &4.0620e+00   &6.3122e+00   &9.0623e+00   &2.5542e-15   &6.4896e-15\\
\hline\hline
	\multirow{9}{*}{\begin{sideways}$\hat{f}_3$\end{sideways}}
&	\multirow{3}{*}{\begin{sideways}$Y_N^2$\end{sideways}} 
&N-1      	&7.2220e-01   &7.3931e-01   &8.2504e-02   &1.3435e-02   &1.6915e-03   &2.6417e-04   &2.0687e-04   &2.0651e-04   \\
&&N 		&9.5017e-02   &1.6675e-02   &2.1038e-03   &2.5979e-04   &2.0466e-04   &2.0642e-04   &2.0650e-04   &2.0651e-04   \\
&&N+1 	&1.0174e-02   &1.2818e-03   &1.9914e-04   &2.0421e-04   &2.0642e-04   &2.0650e-04   &2.0651e-04   &2.0627e-04   \\
\cline{2-11}
&	\multirow{3}{*}{\begin{sideways}$T_N^2$\end{sideways}} 
&N-1 	&7.1773e-01   &6.6775e-01   &1.6675e-02   &2.1038e-03   &2.5979e-04   &2.0466e-04   &2.0642e-04   &2.0650e-04   \\
&&N 		&5.0888e-02   &1.0174e-02   &1.2818e-03   &1.9914e-04   &2.0421e-04   &2.0642e-04   &2.0650e-04   &2.0651e-04   \\
&&N+1 	&1.4089e-01   &4.8298e-02   &1.5351e-02   &3.8721e-03   &9.1676e-04   &3.1805e-04   &2.1067e-04   &2.0025e-04   \\
\cline{2-11}
&	\multirow{3}{*}{\begin{sideways}$S_N^2$\end{sideways}} 
&N-1		&6.4201e-01   &6.6292e-01   &1.4089e-01   &4.8298e-02   &1.0762e-01   &4.4839e-02   &9.2400e-02   &2.5983e-02   \\
&&N 		&1.1713e-02   &1.4089e-01   &3.2214e-01   &1.0762e-01   &1.7750e-01   &9.2400e-02   &1.6179e-01   &4.6104e-02   \\
&&N+1 	&1.4085e-01   &3.2214e-01   &5.3508e-01   &1.7750e-01   &2.6136e-01   &1.6179e-01   &2.5687e-01   &6.7726e-02   \\
\hline
\end{tabular}
\caption{The $\ell_2$ vector norm error $\|{\bf c} - \hat{\bf c}\|_2$ for the generalized sparse Fourier series expansions $\hat{f}_1$ and $\hat{f}_3$ from Experiment 1 for various combinations of $N$ and $M$.
}
\label{tab: experiment 1}
\end{table*}

Figure~\ref{fig:example 1 f2} displays the sparse generalized Fourier Hermite approximations of the functions $f_2, f_3$ with the coefficients computed as in~\eqref{eq:dantzig}, and Table~\ref{tab: experiment 1} gives the error in computing the coefficients as in~\eqref{eq:dantzig} of the test function $f_1$ for each multi-index scheme with $M \in \{N-1, N, N+1\}$ for $N = 2, 3, \ldots, 9$.  In particular, the exact Fourier Hermite series expansion of $f_1$ has exactly 4 nonzero coefficients located at the multi-indices $I_1 = \{(0,0), (0,2), (2,0), (2,2)\}$.  As seen in Table~\ref{tab: experiment 1}, the locations and values of the nonzero coefficients were approximated well provided $I_1\subset W$, and not well when $I_1\not\subset W$.  Similarly, the exact Fourier Hermite series expansion of $f_2$ has exactly 9 nonzero coefficients located at $I_2 = \{(n_1,n_2): n_j = 0,2,4\}$.  On the other hand, the exact Fourier Hermite series expansion of $f_3$ contains infinitely many nonzero coefficients.  However, as in the Taylor expansion of the exponential function, the coefficients decay quickly and an accurate approximation can be obtain by using only a few terms, as seen in Figure~\ref{fig:example 1 f2}.

From Table~\ref{tab: experiment 1} one can see the drawbacks of using the predetermined sparse multi-index set $S_N^d$.  The parameter $N$ must be much larger to achieve accuracy on par with the full-grid rectangular and triangular index set implementations, even for a function as simple as $f_1$.

{\bf Experiment 2.}
In this experiment, the sparse Fourier-Hermite series approximation is used to perform pattern recognition of unknown images with possible rotation.  The Fourier-like coefficients are used to compute the rotation invariant Gaussian-Hermite moments of orders 2,3, and 4, which are then used to classify the unknown images.

Consider a collection of images in the training set $\mathrm{Tr} = \left\{ I_1, I_2, \ldots, I_K\right\}$, where each $M\times M$ image has been transformed to a $M^2\times 1$ vector.  For each vector $I_i$ in the training set, the Fourier-like coefficient vector ${\bf c}_i$ is computed as in Equation~\eqref{eq:dantzig} where the collocation matrix $X$ is slightly changed.  Instead of using the Hermite polynomials, each entry in $X$ is an evaluation of a Hermite function $X_{j,k} = \pi_k({\bf x}_j)\exp (-\|{\bf x}_j\|_2^2/2)$.  Throughout this example, the triangular multi-index set $\mathbb{T}_N^2$ is used.  Although the pixels in each image naturally form an equally spaced collection of nodes, for computational convenience the nodes are mapped to $Z_M\times Z_M$,  where $Z_m$ are the zeros of the $M^\mathrm{th}$ Hermite univariate polynomial. 

From the Fourier-like coefficients approximated using the optimization and collocation method as in~\eqref{eq:dantzig}, one can approximate the Gaussian-Hermite moments of an image.

\begin{theorem}\label{th:moments}
Suppose $f \in L_\omega^2(\mathbb{R}^2)$ and let $\hat{\bf c}$ be a solution to~\eqref{eq:dantzig} using the orthogonal hermite function basis.  Then the elements of $\hat{\bf c}$ approximate the geometric Gaussian-Hermite moments defined by
\[ m_{\bf n} := \iint_{\mathbb{R}^2} f(x,y) \pi_{\bf n}(x,y) e^{-(x^2+y^2)/2} \; \mathrm{d}x\mathrm{d}y.\]
\end{theorem}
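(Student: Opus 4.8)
The plan is to identify the Gaussian-Hermite moment $m_\bn$ as an \emph{exact} generalized Fourier coefficient of $f$ in an appropriate orthonormal basis, and then to chain together the two approximation estimates already available in Section~2: the collocation/interpolation error and the Dantzig-selector coefficient error. First I would record the elementary but crucial observation that the Hermite functions $\psi_\bn(\bx) := \pi_\bn(\bx)\, e^{-\|\bx\|_2^2/2}$ appearing in the modified collocation matrix $X_{j,k}=\pi_k(\bx_j)\exp(-\|\bx_j\|_2^2/2)$ are orthonormal with respect to the \emph{unweighted} inner product on $L^2(\mathbb{R}^2)$. Indeed, since $e^{-\|\bx\|_2^2/2}\,e^{-\|\bx\|_2^2/2}=e^{-(x^2+y^2)}=\omega$, we have
\[ \iint_{\mathbb{R}^2}\psi_\bn \psi_{\bf m}\,\mathrm{d}x\,\mathrm{d}y = \iint_{\mathbb{R}^2}\pi_\bn \pi_{\bf m}\,\omega\,\mathrm{d}x\,\mathrm{d}y = \langle \pi_\bn,\pi_{\bf m}\rangle_\omega = \delta_{\bn,{\bf m}}, \]
using that the $\pi_\bn$ are $\omega$-orthonormal. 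Consequently the Gaussian-Hermite moment is precisely the Fourier coefficient of $f$ against this orthonormal system, $m_\bn = \iint_{\mathbb{R}^2} f\,\psi_\bn\,\mathrm{d}x\,\mathrm{d}y$, so that $\{m_\bn\}$ are the exact coefficients of the generalized Fourier series of $f$ in the Hermite-function basis.

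Next I would show that the interpolation coefficients $\tilde{c}_\bn$ produced by the collocation system ${\bf f}=X\tilde{\bf c}$ approximate the moments $m_\bn$. The key reduction is that the modified collocation condition $f(\bx_j)=\sum_\bn \tilde{c}_\bn \pi_\bn(\bx_j)\,e^{-\|\bx_j\|_2^2/2}$ is equivalent to the ordinary polynomial collocation condition $g(\bx_j)=\sum_\bn \tilde{c}_\bn \pi_\bn(\bx_j)$ for the rescaled function $g(\bx):=f(\bx)\,e^{\|\bx\|_2^2/2}$. A direct computation shows that the exact $\omega$-weighted polynomial coefficient of $g$ coincides with the moment, $\langle g,\pi_\bn\rangle_\omega = \iint f\,\pi_\bn\,e^{-\|\bx\|_2^2/2}\,\mathrm{d}x\,\mathrm{d}y = m_\bn$. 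Hence Theorem~\ref{th:interpolation error}, applied to $g$ on a bounded simply connected domain $\Omega$ with the rectangular grid of Hermite zeros as nodes, bounds the interpolation error; and since the $\pi_\bn$ are orthonormal, Parseval's identity transfers this into a bound on the coefficient discrepancy $|\tilde{c}_\bn - m_\bn|$.

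Finally I would invoke the coefficient stability of the Dantzig selector established at the end of Section~2: the constraint $\|D^{-1}X^\top(X\hat{\bf c}-{\bf f})\|_\infty\le\delta$ in~\eqref{eq:dantzig}, together with the identity $\|\hat f-\tilde f\|_\omega^2=\|\hat{\bf c}-\tilde{\bf c}\|_\omega^2$, forces $\|\hat{\bf c}-\tilde{\bf c}\|$ to be controlled by $\delta$. Combining this with the previous step through the triangle inequality, $|\hat{c}_\bn - m_\bn| \le |\hat{c}_\bn - \tilde{c}_\bn| + |\tilde{c}_\bn - m_\bn|$, yields the desired conclusion that the entries of $\hat{\bf c}$ approximate the Gaussian-Hermite moments. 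The main obstacle I anticipate is controlling the smoothness data needed in the second step: multiplying $f$ by $e^{\|\bx\|_2^2/2}$ amplifies growth at infinity and inflates the seminorm $|g|_{X^m_M(\Omega)}$, so the argument must be confined to a bounded $\Omega$ and requires quantifying how the Gaussian rescaling affects the derivative bounds governing the interpolation estimate in Theorem~\ref{th:interpolation error}.
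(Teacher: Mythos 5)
Your proposal is correct in its core identity and in fact contains the paper's entire proof as its opening step, but it then goes a genuinely different and more ambitious route. The paper's proof is a two-line formal substitution: write $\hat{f} = \sum_{\bn\in W}\hat{c}_\bn \pi_\bn(x,y)e^{-(x^2+y^2)/2}$, replace $f$ by $\hat{f}$ inside the moment integral, observe that the two half-Gaussian factors combine as $e^{-(x^2+y^2)/2}\cdot e^{-(x^2+y^2)/2} = e^{-(x^2+y^2)} = \omega$, and invoke $\omega$-orthonormality of the $\pi_\bn$ to collapse the sum to the single entry $\hat{c}_\bn$; the claim ``$\hat{f}\approx f$'' is taken as given from Section~2 and never quantified, which matches the theorem's qualitative wording. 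Your first paragraph is exactly this identity, phrased as orthonormality of the Hermite functions $\psi_\bn = \pi_\bn e^{-\|\bx\|_2^2/2}$ in unweighted $L^2$, so that $m_\bn$ is the exact Fourier coefficient of $f$ against $\{\psi_\bn\}$. What you add --- the error decomposition $|\hat{c}_\bn - m_\bn| \le |\hat{c}_\bn - \tilde{c}_\bn| + |\tilde{c}_\bn - m_\bn|$, the rescaling $g = f e^{\|\bx\|_2^2/2}$ reducing Hermite-function collocation to polynomial collocation with $\langle g,\pi_\bn\rangle_\omega = m_\bn$, then Theorem~\ref{th:interpolation error} plus the Dantzig constraint --- would upgrade ``approximate'' to an actual bound, which the paper never attempts. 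Two caveats on that quantitative layer, beyond the growth issue you already flagged: the transfer from the sup-norm bound of Theorem~\ref{th:interpolation error}, which holds only on a bounded domain $\Omega$, to coefficient discrepancies via Parseval is not valid as stated, since Parseval in $L_\omega^2(\mathbb{R}^2)$ requires global control and the polynomial interpolation error grows off $\Omega$ (the Gaussian weight helps, but this needs an explicit argument); and the hypothesis $f\in L_\omega^2(\mathbb{R}^2)$ alone does not place $g$ in $L_\omega^2\cap C^m$ (indeed $\|g\|_\omega$ equals the unweighted $L^2$ norm of $f$), so Theorem~\ref{th:interpolation error} needs strengthened assumptions on $f$. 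Neither gap affects the theorem as the paper states and proves it, since its own argument is purely the substitution computation you began with.
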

\begin{proof}
Using an orthonormal hermite function basis in~\eqref{eq:dantzig} yields the approximation 
\[\hat{f}(x,y) = \sum_{{\bf n}\in W} \hat{c}_{\bf n} \pi_{\bf n}(x,y)e^{-(x^2+y^2)/2}\]
 similar to~\eqref{eq:approx final}. Substituting the above result  into the formula for the geometric Gaussian-Hermite moments yields
\begin{align*}
	m_{\bf n} &\approx \iint_{\mathbb{R}^2} \hat{f}(x,y) \pi_{\bf n}(x,y) e^{-(x^2+y^2)/2} \;\mathrm{d}x\mathrm{d}y \\
	&= \iint_{\mathbb{R}^d} \sum_{\bf t} \hat{c}_{\bf t} \pi_{\bf t}(x,y) \pi_{\bf n}(x,y) e^{-(x^2+y^2)} \;\mathrm{d}x\mathrm{d}y\\
	&= \sum_{{\bf t}\in W} \hat{c}_{\bf t} \left \langle \pi_{\bf t}, \pi_{\bf n} \right\rangle_\omega,
\end{align*}
which reduces to just the element $\hat{c}_{\bf n}$ since the hermite polynomials are orthonormal relative to the weight $\omega$.
\end{proof}
	
Although Theorem~\ref{th:moments} is written in the continuous case, the discrete case follows by a straightforward substitution of summations for integrations. Using the Hermite functions instead of the Hermite polynomials is important and guarantees that the Fourier-like coefficient vector is a close approximation to the corresponding Gaussian-Hermite moment for each index in $\mathbb{R}_N^2$. 
From a small collection of these approximated Gaussian-Hermite moments, one can determine the rotation invariant Gaussian-Hermite moments $\phi_j$ of orders 2, 3, and 4.  A list of the formulas for these rotation invariant moments is given in the Appendix.

For this experiment, seven images form the training set.  These images are 8-bit grayscale $50\times 50$ images of simplified Chinese characters, which were studied in~\cite{yangdai}.  The training set is displayed in the top row of Figure~\ref{fig:example 2 training}.  Note that Images 1-3 are visually similar, as are Images 4-5 and Images 6-7.  The testing set $\mathrm{Ts}$ is formed by rotating the images in the training set and adding noise:
\[	\mathrm{Ts} = \left\{ R_\theta v + z: v\in\mathrm{Tr}, \theta \in \{0, \pi/4, \pi/2, \ldots, 7\pi/8 \right\},\] 
where the operator $R_\theta$ performs counterclockwise rotation through an angle of $\theta$, and $z$ is a collection of independent and identically distributed normal random variables with mean $0$ and standard deviation $\sigma$.   
For any vector ${\bf x}$ in the training or testing set, let $\Phi_{\bf x} \in \mathbb{R}^{11}$ be the vector of the rotation invariant Gaussian-Hermite moments of orders 2 thru 4 corresponding to the vector ${\bf x}$.

To perform classification of the rotated images, say ${\bf x}\in\mathrm{Ts}$ is classified as a rotation of image ${\bf y}\in\mathrm{Tr}$ iff
\[ \left\| \Phi_{\bf x} - \Phi_{\bf y} \right\|_1 \leq \left\| \Phi_{\bf x} - \Phi_{\bf z} \right\|_1 \quad \forall {\bf z}\in\mathrm{Tr}.\]  
\begin{table}
	\centering
	\begin{tabular}{l||r|r||r|r}
	&\multicolumn{2}{c||}{White Noise} &\multicolumn{2}{c}{Bit-flip}\\
	$\sigma$	& Identified	&Categorized	&Identified	 &Categorized\\
	\hline
	0.00	& 1.0000	& 1.0000	& 1.0000	& 1.0000 \\
	0.05	& 0.9764	& 1.0000	& 0.8032	& 0.9932 \\
	0.10	& 0.9657	& 1.0000	& 0.6950	& 0.9593 \\
	0.15	& 0.9421	& 0.9975	& 0.6107	& 0.8782 \\
	0.20	& 0.9175	& 0.9979	& 0.5007	& 0.7653 \\
	0.25	& 0.9079	& 0.9943	& 0.4000	& 0.6250 \\
	\end{tabular}
\caption{Average ratio of images correctly identified and sorted into the three categories over 50 simulations using all images in the testing data set with white Gaussian noise at level $\sigma$ and bit-flipped pixels at proportion $\sigma$.}
\label{tab:classification accuracy}
\end{table}

To test the accuracy of the described classification method, the elements in $\mathrm{Ts}$ were classified 50 times each for $\sigma = 0, 0.05, 0.10, 0.15, 0.20$, corresponding to $0\%, 5\%,$ etc.\ noise corruption.  The average percentage of correctly classified digits in the testing data set for each noise level is shown in Table~\ref{tab:classification accuracy}.
More details for the values of $\|\Phi_{R_{\pi/2} v}-\Phi_{ y}\|_1$ for all $v,y \in \mathrm{Tr}$ are shown in Table~\ref{tab:example 2 rot90}.  It is clear from the table that although many of the images are visually similar, the rotation invariant Gaussian-Hermite moments are similar only for rotations of the same images.  Of course, if the Gaussian-Hermite moments are approximated well, then this method is expected to yield 100\% classification accuracy.  The novelty comes from computing the method used to compute the Gaussian-Hermite moments.  Employing the solution of~\eqref{eq:dantzig} is an elegant way to compute the moments without using quadrature schemes or hierarchical methods~\cite{hosny, jshen}.

\begin{figure*}
	\centering
	\includegraphics{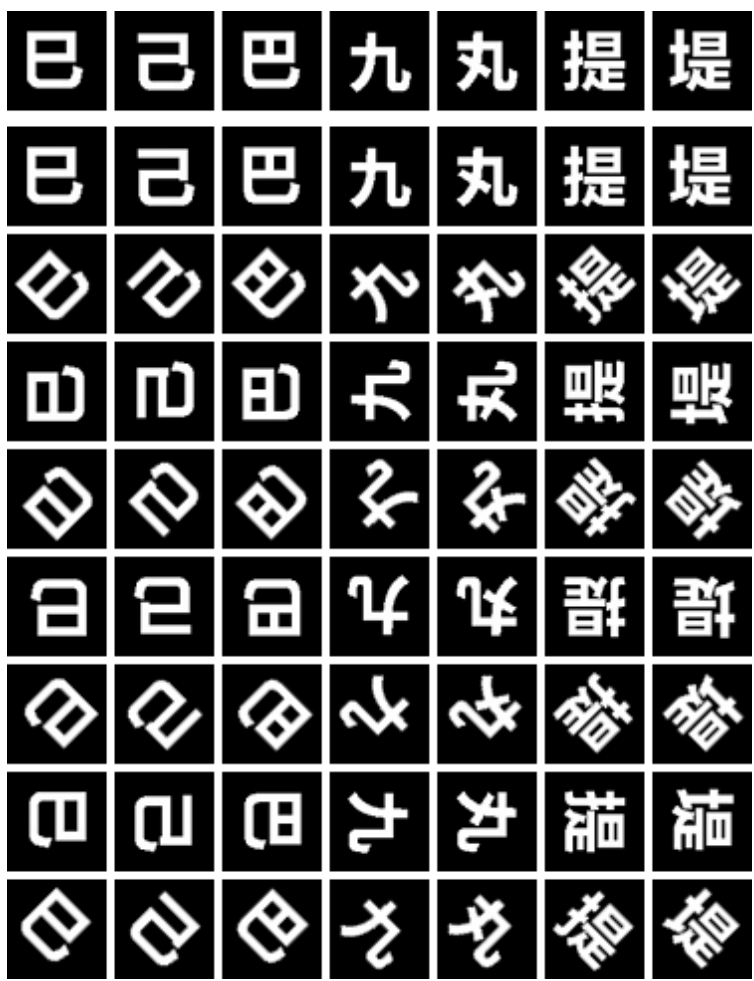}
	\caption{The images forming the training set and testing set from Example 2.  The top row contains the seven images in the training set and the remaining images form the testing set of images rotated counterclockwise $0\degree, 45\degree, 90\degree, 135\degree,$ etc.  }
	\label{fig:example 2 training}
\end{figure*}

\begin{table*}
	\centering
	\begin{tabular}{c|ccccccc}
	k\textbackslash j&1&2&3&4&5&6&7\\
\hline
1&   6.9480e-09   	&4.9948		&4.5607		&7.4822		&10.5865		&6.2996		&8.6762\\
2&   4.9948		&2.1222e-09   	&2.9397		&8.0141		&11.3291		&7.4386		&7.0308\\
3&   4.5607		&2.9397		&3.4982e-09   	&6.7547		&10.4790		&6.5229		&7.5554\\
4&   7.4822		&8.0141		&6.7547		&9.8842e-10   	&6.7450		&4.0142		&5.3556\\
5&   10.5866		&11.3291		&10.4790		&6.7450		&9.2692e-10   	&6.1498		&6.7113\\
6&   6.2996		&7.4386		&6.5229		&4.0142		&6.1498		&3.9752e-09   	&3.5455\\
7&   8.6762		&7.0308		&7.5554		&5.3556		&6.7113		&3.5455		&1.9313e-09\\
	\end{tabular}
	\caption{Values of $\displaystyle \left\| \Phi_{x} - \Phi_{I_k}\right\|_1$, where $x$ is the element $I_j\in \mathrm{Tr}$ with a $90\degree$ rotation. }
	\label{tab:example 2 rot90}
\end{table*}

\section{Summary}
In this paper, a method for computing a sparse generalized Fourier series with orthogonal polynomial bases of multivariate functions was presented.  The method first uses a truncated set of multi-indices, then approximates the coefficients using the solution of an optimization problem involving a collocation model.
Several examples were presented to illustrate the accuracy and utility of the proposed method.  In the first set of experiments, the sparse generalized Fourier series approximation of several test functions were computed, and the approximation errors were presented.  In the second set of experiments, the coefficients in the sparse generalized Fourier series were computed and used as rotation invariant feature vectors to perform pattern recognition on unknown rotated images.

\bibliographystyle{siam}

\newpage
\appendix
Below are the complete and independent set of rotation invariants of Gaussian-Hermite moments of orders 2, 3 and 4~\cite{yangprl}.  
\begin{align*}
	\phi_1 = &m_{20} + m_{02}\\
	\phi_2 = &(m_{30}+m_{12})^2 + (m_{03}+m_{21})^2\\
	\phi_3 = &(m_{20}-m_{02})[(m_{30}+m_{12})^2 - (m_{03}+m_{21})^2] \\
	&+4m_{11}(m_{30}+m_{12})(m_{03}+m_{21})\\
	\phi_4 = &m_{11}[(m_{30}+m_{12})^2 - (m_{03}+m_{21})^2]\\
	&-(m_{20}-m_{02})(m_{30}+m_{12})(m_{03}+m_{21})\\
	\phi_5 = &(m_{30}-3m_{12})(m_{30}+m_{12})\times\\
		&\times[(m_{30}+m_{12})^2 - 3(m_{03}+m_{21})^2]\\
	&+(m_{03}-3m_{21})(m_{03}+m_{21})\times\\
	&\times[(m_{03}+m_{21})^2 - 3(m_{30}+m_{12})^2]\\
	\phi_6 = &(m_{30}-3m_{12})(m_{03}+m_{21})\times\\
	&\times[(m_{03}+m_{21})^2-3(m_{30}+m_{12})^2]\\
	&-(3m_{21}-m_{03})(m_{30}+m_{12})\times\\
	&\times[(m_{30}+m_{12})^2 - 3(m_{03}+m_{21}^2]
\end{align*}
\begin{align*}
	\phi_7 = &m_{40}+2m_{22}+m_{04}\\
	\phi_8 = &(m_{40}-m_{04})[(m_{30}+m_{12})^2 - (m_{21}+m_{03})^2]\\
	&+4(m_{31}+m_{13})(m_{30}+m_{12})(m_{21}+m_{03})\\
	\phi_9 = &(m_{31}+m_{13})[(m_{30}+m_{12})^2 - (m_{21}+m_{03})^2]\\
	\phi_{10} = &(m_{40}-6m_{22}+m_{04})[(m_{30}+m_{12})^4\\
	&-6(m_{30}+m_{12})^2(m_{21}+m_{03}^2 + (m_{21}+m_{03})^4]\\
	&+16(m_{31}-m_{13})(m_{30}+m_{12})(m_{21}+m_{03})\times\\
	&\times[(m_{30}+m_{12})^2-(m_{21}+m_{03})^2] \\
	\phi_{11} = &(m_{40}-6m_{22}+m_{04})(m_{30}+m_{12})(m_{21}+m_{03})\times\\
	&\times[(m_{21}+m_{03})^2-(m_{30}+m_{12})^2]\\
	&-(m_{31}-m_{13})[(m_{30}+m_{12})^4\\
	&-6(m_{30}+m_{12})^2(m_{21}+m_{03})^2 + (m_{21}+m_{03})^4].
\end{align*}
\end{document}